\documentclass[12pt,a4paper]{article}
\usepackage{amsmath,amssymb,amsthm}
\usepackage{enumitem}
\usepackage{hyperref} 

\newtheorem{theorem}{Theorem}
\newtheorem{lemma}[theorem]{Lemma}
\newtheorem{corollary}[theorem]{Corollary}
\newtheorem{proposition}[theorem]{Proposition}
\theoremstyle{remark}
\newtheorem{remark}[theorem]{Remark}

\newcommand{\eps}{\varepsilon}
\newcommand{\calL}{\mathcal{L}}

\newcommand{\K}{\mathbb{K}}
\newcommand{\N}{\mathbb{N}}
\newcommand{\R}{\mathbb{R}}

\newcommand{\norm}[1]{\lVert #1 \rVert}

\newcommand{\ipr}[1]{\left\langle#1\right\rangle}
\DeclareMathOperator{\codim}{codim}

\DeclareMathOperator{\rk}{rank}
\DeclareMathOperator{\rank}{rank}

\DeclareMathOperator{\diag}{diag}
\newcommand{\id}{I}

\newcommand{\set}[1]{\left\{#1\right\}}
\newcommand{\bset}[1]{\Bigl\{#1\Bigr\}}

\newcommand{\co}{\colon}
\newcommand{\ds}{\displaystyle}

\title{On bounds between all s-numbers \\and widths of convex sets}
\author{Mario Ullrich}
\date{\today}

\begin{document}
\maketitle

\begin{abstract}
We prove $a_n(S) \le e\,(n+1)\, s_n(S)$ 
for every s-number sequence~$(s_n)$, every bounded linear operator $S$ between normed spaces, and every $n \in \N_0$, 
where $a_n$ are the approximation numbers, which are the largest s-numbers. 
This is sharp up to the constant and settles conjectures of 
Mityagin, Henkin, Carl and Pietsch dating back to 1963. 
We also extend it to widths of convex sets and discuss optimality there. The proof is elementary.
\end{abstract}

\section{Introduction}

The concept of 
s-numbers, as axiomatized by Pietsch~\cite{Pietsch87,Pietsch07,Pietsch-s}, generalizes singular values of operators (or matrices) mapping between Hilbert spaces, to operators between Banach spaces. 
Some examples of s-numbers are motivated by the various equivalent definitions of singular values or by geometric considerations, and some appear naturally in approximation theory.  

For example, for a bounded linear operator $S$ between Banach spaces, the approximation numbers $a_n(S)$ measure the error of best rank-$n$ approximation; the Gelfand and Kolmogorov numbers $c_n(S)$ and $d_n(S)$ describe optimal recovery errors from $n$ linear measurements and best approximation from $n$-dimensional subspaces; the Bernstein numbers $b_n(S)$ measure the largest $(n+1)$-dimensional ball in the image of the unit ball; the manifold numbers $\delta_n(S)$, which are genuinely nonlinear, measure the best approximation by continuous mappings through an $n$-dimensional manifold; and the Hilbert numbers $h_n(S)$ are the singular values of the ``largest'' Hilbert space operator that factors through $S$. The s-number axioms (given below) are designed so that 
they all 
agree on Hilbert spaces with the singular values.

Some of these s-numbers
are fundamental objects in 
approximation theory~(\cite{CDPW,DHM89,Ism74,Pinkus,Tikh60}) and information-based complexity (\cite{KNU,KU26,Mathe90,Novak-widths,TWW88}). 
They are also crucial in the study of eigenvalues of operators, operator ideals, and other properties of Banach spaces, see e.g.~\cite{Koe86,LE11,PT89,Pietsch87,Pietsch07,Pietsch-ideals}.

One advantage of an axiomatic theory of s-numbers is the characterization of the smallest and largest among them, 
possibly with additional conditions. 
In particular, 
the approximation numbers are the largest 
and the Hilbert numbers are the smallest s-numbers, 
so that 
\[
h_n(S) \le s_n(S) \le a_n(S)
\] 
for every s-number sequence~$(s_n)$, 
see~\cite[2.3.4 \& 2.6.3]{Pietsch87}. 
Clearly, reverse bounds are desirable, and 
it 
has been proven by Mityagin and Henkin~\cite{MH} in 1963 that 
\begin{equation} \label{eq:MH}
d_n(S) \le (n+1)^2\, b_n(S),
\end{equation}
see also~\cite[Theorem~8.6]{Pietsch-s}. 
They also conjectured that the factor $n+1$ suffices. 
Bauhardt~\cite{Bauhardt} gave the corresponding bound and conjecture with $h_n$ in place of $b_n$. The factor $(n+1)^2$ has apparently not been improved since, and the conjecture is repeated in the books of 
Pietsch~\cite[11.12.4]{Pietsch-ideals} 
and Pinkus~\cite[II.5]{Pinkus}, see also Novak~\cite[eq.~(2.9)]{Novak-widths}.
On top of that, Carl and Pietsch~\cite{CP78} conjectured that one may even 
have $a_{bn}(S)\le c\, (n+1)\, h_n(S)$, for some $b,c\ge1$, 
see also 
Pietsch's ``Long-standing open problems of Banach space theory: My personal top ten''~\cite[Problem~5]{Pie09}, 
or~\cite[6.2.3.14]{Pietsch07}.

In a weaker form, 
the conjecture of Mityagin and Henkin has been proven by Pietsch~\cite[11.12.3]{Pietsch-ideals}, who showed that 
\begin{equation} \label{eq:bound-geom}
\max\bset{c_n(S),\, d_n(S)} 
\;\le\; (n+1)\,\left(\prod_{k=0}^n h_k(S) \right)^{1/(n+1)}, 
\end{equation}
see also the streamlined presentation of~\cite{U24} for the improved constant. (Note that there is an index shift compared to some definitions in the literature.) \\
A comparison of all s-numbers was then achieved 
by a known comparison between $a_n$ and $c_n$/$d_n$ based on projections of small norm of~\cite{GarlingGordon,KS71}, see~\cite[Theorem~8.4]{Pietsch-s}, leading to an additional factor~$\sqrt{n+1}$. 

In Theorem~\ref{thm:main}, we prove the 
stronger conjecture of Carl and Pietsch up to the constant $e$, 
i.e., 
\[
	a_n(S)
	\;\le\; e\,(n+1)\, h_n(S) 
\]
for every bounded linear operator $S$ and every $n \in \N_0$. 
%
%

\medskip

The proof is based on a new determinant quantity~$\Delta_k(S)$, see Section~\ref{sec:delta}, 
which we use instead of the \emph{Grothendieck determinant} 
that is usually used in this context. 
We show that the decay of $\Delta_k(S)$
can be controlled by both $h_n$ (Lemma~\ref{lem:ratio-u}) and in terms of 
$a_n$
(Lemma~\ref{lem:ratio-l}), 
which leads to the main result.

\medskip

For $S\colon X\to Y$ with $X$ or $Y$ 
a Hilbert space, 
Mityagin and Henkin~\cite{MH} also conjectured that the factor $(n+1)^2$ in~\eqref{eq:MH} can be replaced by~$\sqrt{n+1}$. 
Up to a factor $\sqrt{e}$, 
this conjecture has been settled by Pukhov~\cite{Pu79}.  
See also~\cite[Theorem 3.6]{KNU} for a 
bound between $c_n$ and $b_n$ without the $\sqrt{e}$. 
Here, we extend it to a bound between all s-numbers, showing that 
\[
a_n(S) \le \sqrt{e\,(n+1)}\; h_n(S)
\]
whenever $X$ or $Y$ is a Hilbert space, 
see Theorem~\ref{thm:main}.

\medskip

We add that many s-numbers can be understood as certain \emph{widths of sets}. 
In particular, by considering the identity $S=\id\colon X\to Y$ with $\id x=x$ and $X\subset Y$, one obtains by $s_n(\id)$ various widths of the unit ball~$B_X$, measured in the norm of~$Y$.  
We discuss an extension to width of (nonsymmetric) convex sets in Section~\ref{sec:sets}.
The same ideas for the bound between them apply 
in this case. 
While Theorem~\ref{thm:main} carries over verbatim for convex and symmetric $F$, 
an additional factor $\sqrt{n+1}$ appears 
for nonsymmetric $F$, see Theorem~\ref{thm:sets}. 

\section{Setting and main result}\label{sec:setting}

Throughout, $\K \in \{\mathbb{R},\mathbb{C}\}$, and $X, Y, W, Z$ are normed spaces over $\K$.
We write $B_X$ for the closed unit ball of $X$,
$I_X$ for the identity on $X$, and $X'$ for the continuous dual of $X$. 
For $x\in X$ and a functional $b\in X'$, we write $\ipr{x,b}=b(x)$. 
By $\mathcal{L}(X,Y)$ we denote the bounded linear operators from~$X$ to~$Y$, with $\mathcal{L}$ being the class of all such operators. 
An operator $A\in\calL$ with $\|A\|\le1$ is called \emph{contraction}. 
For a Hilbert space~$H$
and denote by $\ipr{\cdot,\cdot}_H$ its inner product, and for $A\in\calL(H,K)$ between Hilbert spaces we write $A^*\in\calL(K,H)$ for its adjoint, defined by
$\ipr{Ax,y}_K=\ipr{x,A^*y}_{H}$ for $x\in H$ and $y\in K$. 
Moreover, $\ell_p^k$ is $\K^k$ with usual $p$-norm and standard basis $(e_k)$.

Following \cite{Pietsch07}, a map $S \mapsto (s_n(S))_{n \in \N_0}$ assigning to every $S \in \mathcal{L}$ a nonnegative scalar sequence is an \emph{s-number sequence} if 
for all $n \in \N_0$:
\begin{enumerate}[label=\textup{(S\arabic*)}, leftmargin=3em]
\item $\norm{S} = s_0(S) \ge s_1(S) \ge \ldots \ge 0$ \quad for all $S \in \mathcal{L}$,
\item $s_n(S + T) \le s_n(S) + \norm{T}$  \quad for all $S, T \in \mathcal{L}(X,Y)$,
\item $s_n(BSA) \le \norm{B}\, s_n(S)\, \norm{A}$  \quad for
$W \xrightarrow{A} X \xrightarrow{S} Y \xrightarrow{B} Z$,
\item $s_n(S) = 0$  \quad whenever $\rk(S) \le n$,
\item $s_n(I_{\ell_2^{n+1}}) = 1$.
\end{enumerate}

Indeed, for a compact operator $S$ between Hilbert spaces the axioms determine $s_n(S)$ uniquely as the $n$-th \emph{singular value} $\sigma_n(S) := \sqrt{\lambda_n(S^*S)}$, 
where the eigenvalues $\lambda_k(T)$ of $T\in\calL(X,X)$ are characterized by $Tv_k=\lambda_k(T)\cdot v_k$ for some $v_k\in X\setminus\{0\}$, and ordered decreasingly,  
see~\cite[2.11.9]{Pietsch87}. 

We index singular values, like s-numbers, starting from zero, 
so that the smallest singular value of an $(n+1) \times (n+1)$ matrix $T$ is $\sigma_n(T)$. 
Note that 
singular numbers and s-numbers are often indexed from one, but we follow the convention of \cite{Pinkus} and \cite{KU26}. 

\medskip

Important examples are the approximation, Bernstein, Gelfand, Kolmogorov, 
manifold and Hilbert numbers: 
\begin{align*} 
  a_n(S) &\;:=\; \inf\Bigl\{\, \norm{S - L} \co\; L \in \mathcal{L}(X,Y),\ \rk L \le n \,\Bigr\},\notag\\[2pt]
  b_n(S) &\;:=\; \sup\Bigl\{\, \inf_{\substack{x \in M \\ x \ne 0}} \tfrac{\norm{Sx}}{\norm{x}}
      \co\; M \subseteq X \text{ subspace},\ \dim M = n+1 \,\Bigr\},\notag\\[2pt]
  c_n(S) &\;:=\; \inf\Bigl\{\, \norm{S|_N} \co\; N \subseteq X \text{ closed subspace},\ \codim N \le n \,\Bigr\},\notag\\[2pt]
  d_n(S) &\;:=\; \inf\Bigl\{\, 
  \sup_{x \in B_X}\, \inf_{y \in V} \norm{Sx - y}
  \co\; V \subseteq Y 
  \text{ subspace},\ 
  \dim V \le n \,\Bigr\},\notag\\[2pt]
  \delta_n(S) &\;:=\; \inf\ \Bigl\{\, \sup_{x \in B_X} \norm{ Sx - \Phi(\psi(x)) }
      \co\; \psi \in C(B_X, \K^n),\ \Phi \in C(\K^n, Y) \,\Bigr\},\notag\\[2pt]
  h_n(S) &\;:=\; \sup\Bigl\{\, a_n(BSA) \co\; A \in \mathcal{L}(\ell_2, X),\ B \in \mathcal{L}(Y, \ell_2),\ \norm{A}, \norm{B} \le 1 \,\Bigr\}, 
\end{align*}
where 
$C(M_1,M_2)$ denotes the set of continuous maps from $M_1$ to $M_2$. 
Detailed proofs of the \textit{s-number axioms} and further properties can be found in~\cite{Pietsch87}, except for $\delta_n$, 
which are not classical s-numbers and which are only stated here to indicate the flexibility of this notion. The axioms have been verified by Mathé~\cite{Mathe90}, see also~\cite{DHM89,DKLT} or \cite[\S9]{KU26} for more on nonlinear approximation in this context.

\bigskip

Since $h_n$ is the smallest and $a_n$ is the largest s-number, see~\cite[2.3.4 \& 2.6.3]{Pietsch87}, 
the following result implies bounds between arbitrary s-numbers.

\medskip

\begin{theorem}\label{thm:main}
For every bounded linear operator $S \colon X \to Y$ and $n\in\N_0$, we have 
\[
  a_n(S) \;\le\; e\,(n+1)\; h_n(S).
\]
Moreover, if $X$ or $Y$ is a Hilbert space, then
\[
  a_n(S) \;\le\; \sqrt{e\,(n+1)}\;\, h_n(S).
\]
\end{theorem}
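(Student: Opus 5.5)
We argue through a determinant quantity that can be bounded from above by the Hilbert numbers and from below by the approximation numbers. For $k\in\N$ set
\[
\Delta_k(S) := \sup\Bigl\{\, \bigl|\det\bigl(\ipr{Sx_i,b_j}\bigr)_{i,j=1}^k\bigr| \co\; x_i\in B_X,\ b_j\in B_{Y'} \,\Bigr\},
\]
with $\Delta_0(S)=1$. Writing the $k\times k$ matrix as $B S A$ with $A=\sum_i x_i\otimes e_i\colon \ell_1^k\to X$ and $B\colon Y\to\ell_\infty^k$, we pass to Hilbert spaces. We have $\|A\colon \ell_2^k\to X\|\le\sqrt{k}$ and $\|B\colon Y\to \ell_2^k\|\le \sqrt{k}$. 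The determinant is the product of the singular values $\sigma_j$ of the matrix, and each $\sigma_j$ is at most the corresponding $a_j$ of the $\ell_2$-factorization. This costs a factor $k$ per singular value, which is far too much. So instead we use the intended ratio form, the analogue of Lemma ``ratio-u''.

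\textbf{Step 1 (upper bound via $h_n$).} We aim for
\[
\Delta_{k+1}(S)\le C_{k}\, h_k(S)\,\Delta_k(S).
\]
Expanding the $(k+1)$-determinant along a row, or better by a Schur-complement argument, we fix an almost extremal $k\times k$ configuration and look at the rank-one correction. Cramer's rule expresses the remaining row and column with coefficients of modulus $\le 1$ relative to an extremal choice; this is the Auerbach/John-type maximality trick. The $(k+1)$-th "new direction" then yields a $(k+1)$-dimensional piece on which $S$ acts. Sandwiching it between Hilbert-space contractions and using the extremality (coefficients bounded by $1$) should give the ratio bound with $C_k=k+1$ in general. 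When one side is Hilbert, the Cauchy--Schwarz loss on that side disappears and we expect $C_k=\sqrt{k+1}$. Iterating yields $\Delta_{n+1}(S)\le \prod_{k\le n} C_k\, h_k(S)$, but we want only $h_n$, so we use the ratio at the single index $n$ rather than the product.

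\textbf{Step 2 (lower bound via $a_n$).} We choose $x_i,b_j$, $i,j\le n$, nearly maximizing $\Delta_n$ and form the rank-$n$ operator $L=\sum_{i,j} (G^{-1})_{ji}\, \ipr{\cdot,S' b_j}\, Sx_i$, where $G=(\ipr{Sx_i,b_j})$. This is an interpolating projection-type approximation. For $x\in B_X$ and $b\in B_{Y'}$, the value $\ipr{(S-L)x,b}$ equals the ratio of the bordered $(n+1)$-determinant to $\det G$, by the Schur complement formula. Hence $\|S-L\|\le \Delta_{n+1}(S)/\Delta_n(S)$ up to the near-maximality slack, so $a_n(S)\le \Delta_{n+1}(S)/\Delta_n(S)$. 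This is the clean half.

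\textbf{Step 3 (combining).} Combining Step 2 with Step 1 at index $n$ gives $a_n\le C_n h_n$. The constant $e$ presumably enters because the upper estimate is really of the form $\Delta_{n+1}\le (n+1)^{(n+1)/2}\cdots$ over a chain of indices. Concretely, we take a sup over all $k\le n$ of the ratios and use $(1+1/n)^n\le e$ when converting between consecutive normalizations. The main obstacle is Step 1: obtaining a single-index ratio bound with only $h_n$ (not a geometric mean as in~\eqref{eq:bound-geom}). I would first try to show that for the extremal configuration, the operator $S$ restricted and compressed to the $(n+1)$ chosen vectors factors as a Hilbert-space matrix $M$ times well-conditioned contractions. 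Then $|\det M|/\max_k|\det M_k|\ge \sigma_n(M)/(n+1)$ up to $e$, via the inequality that the smallest singular value dominates the ratio of the full determinant to the largest $n$-minor divided by $\sqrt{n+1}$, from Cauchy--Binet. A further $\sqrt{n+1}$ comes from the $\ell_\infty\to\ell_2$ passage on each nonHilbert side.
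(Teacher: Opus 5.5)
There is a genuine gap, and it is in your Step~1. Your Step~2 is fine. With your $\Delta_k$ (the paper's $\Gamma_k^k$), bordering a near-extremal $n$-configuration by arbitrary $x\in B_X$, $b\in B_{Y'}$ is admissible, so the Schur complement gives $a_n(S)\le\norm{S-L}\le\Delta_{n+1}(S)/\Delta_n(S)$ at no cost.

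Step~1 carries the entire difficulty, and it is only asserted. The single-index bound $\Delta_{n+1}\le(n+1)\,h_n\,\Delta_n$, combined with Step~2, would give $a_n\le(n+1)h_n$, which is more than the theorem claims. The mechanism you sketch does not produce it:
\begin{itemize}
\item Take the extremal matrix $N=B_1SA_1$ with contractions $A_1\colon\ell_1^{n+1}\to X$ and $B_1\colon Y\to\ell_\infty^{n+1}$. Passing to $\ell_2$ costs $\sqrt{n+1}$ per side and gives $\sigma_n(N)\le(n+1)\,h_n(S)$.
\item You would then need $\prod_{k<n}\sigma_k(N)\le c\,\Delta_n(S)$ with an absolute constant $c$. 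But $\prod_{k<n}\sigma_k(N)=\norm{\mathrm{adj}(N)}_{2\to2}$, and all you know is that the entries of $\mathrm{adj}(N)$ (the $n\times n$ minors) are at most $\Delta_n(S)$.
\item From that, Cauchy--Binet gives only $c=n+1$, not $\sqrt{n+1}$. This is sharp for general matrices: if $N^{-1}=J+\delta I$ with $J$ the all-ones matrix, the norm of $\mathrm{adj}(N)$ is $(n+1+\delta)/(1+\delta)$ times its largest entry.
\item The other inequality you quote, $|\det M|/\max|\text{minor}|\ge\sigma_n(M)/(n+1)$, points in the wrong direction for an upper bound.
\end{itemize}
So this route returns $(n+1)^2$, the old factor in \eqref{eq:MH}. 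Cramer/Auerbach extremality does not rescue it. It only bounds $N^{-1}B_1S\colon X\to\ell_\infty^{n+1}$ and $SA_1N^{-1}\colon\ell_1^{n+1}\to Y$, i.e.\ compositions with $S$. The definition of $h_n$ needs $\ell_2$-contractions into $X$ and out of $Y$ that do not involve $S$.

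Step~3 is also inconsistent. If Steps~1--2 held, no $e$ and no chain over $k\le n$ would appear. Your own tally ($\sqrt{n+1}$ from Cauchy--Binet plus $\sqrt{n+1}$ per non-Hilbert side) gives $(n+1)^{3/2}$ in general and $n+1$ in the Hilbert case, not the constants you claim.

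The missing idea is to put the loss on the other side. The paper replaces your individual constraints $x_i\in B_X$, $b_j\in B_{Y'}$ by the joint constraints $\norm{A\colon\ell_2^k\to X}\le1$ and $\norm{B\colon Y\to\ell_2^k}\le1$.
\begin{itemize}
\item \textbf{Upper bound becomes exact.} Compressing $T=BSA$ to its top $n$ singular directions keeps $AU$ and $V^*B$ contractions, so $\prod_{i<n}\sigma_i(T)\le\Delta_n(S)$. Also $(\sigma_n\Sigma^{-1}P^*B)\,S\,(AQ)=\sigma_n I$, so $\sigma_n(T)\le h_n(S)$ by \eqref{eq:def-hn}. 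Hence $\Delta_{n+1}\le h_n\Delta_n$ with no loss.
\item \textbf{Lower bound pays the price.} To keep the bordered maps contractive, one takes $A_0(\xi,\tau)=\sqrt{1-\lambda}\,A\xi+\sqrt{\lambda}\,\tau x$ and $B_0y=(\sqrt{1-\lambda}\,By,\sqrt{\lambda}\,\ipr{y,b})$. The Schur complement then gives $\Delta_{n+1}\ge(1-\lambda)^n\lambda\,\Delta_n\norm{S-L}$, and $\max_\lambda(1-\lambda)^n\lambda\ge 1/(e(n+1))$. This is where $e(n+1)$ comes from.
\item \textbf{Hilbert case.} For Hilbert $Y$ (resp.\ $X$), restrict $B$ to coisometries (resp.\ $A$ to isometries). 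Since $B(S-L)=0$ (resp.\ $(S-L)A=0$), the new row (resp.\ column) can be appended orthonormally without weights. The factor becomes $(1-\lambda)^{n/2}\lambda^{1/2}$, which yields $\sqrt{e(n+1)}$.
\end{itemize}
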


\medskip

The proof also gives an explicit form of the rank-$n$ approximation that achieves the bound. 
That is, 
we have $\|S-L\|\le e\,(n+1)\; h_n(S)$ for 
\[
L \,:=\, SA(BSA)^{-1}BS \,\in\, \calL(X,Y)
\]
with contractions $A \in \mathcal{L}(\ell_2^n, X)$ and $B \in \mathcal{L}(Y, \ell_2^n)$ such that the determinant $\det(BSA)$ is (almost) maximized over all contractions. 
It seems interesting that $L$ is of the form $SP$ or $PS$, respectively, where $P$ is a projection, see also~\cite[11.5.2 \& 11.6.2]{Pietsch-ideals}.

\medskip

The dependence on $n$ in Theorem~\ref{thm:main} is optimal. 
For this, it is enough to consider the identity $\id x=x$ as a mapping between $\ell_p$ and $\ell_q$, or its finite-dimensional restrictions, with $p,q\in\{1,2,\infty\}$. 
In fact, 
we only need the known bounds 
\begin{enumerate}[label=(\roman*)]
    \item $\ds a_n(\id\co \ell_1\to\ell_\infty)
    =c_n(\id\co \ell_1\to\ell_\infty)
    = d_n(\id\co \ell_1\to\ell_\infty)
    =\frac{1}{2}$ 
    for $n\ge1$,
    see~\cite[11.11.10 \& 11.5.3 \& 11.6.3]{Pietsch-ideals},  
    \item $\ds h_n(\id\co \ell_1\to\ell_\infty)\asymp \frac{1}{n+1}$, see~\cite{HL84}, 
    \item $\ds a_n(\id\co \ell^m_1\to\ell^m_2)
    =d_n(\id\co \ell^m_1\to\ell^m_2)
    = c_n(\id\co \ell^m_2\to\ell^m_\infty)
    =\sqrt{1-\frac{m}{n}}$\\ 
    for $n\le m$, 
    see~\cite[11.11.8 \& 11.5.2 \& 11.6.2]{Pietsch-ideals}, or \cite{Stechkin54}, and  
    \item $\ds h_n(\id\co \ell^m_1\to\ell^m_2)=h_n(\id\co \ell^m_2\to\ell^m_\infty)\asymp \frac{1}{\sqrt{n+1}}$ for $n< m$, see~\cite{HL84}, 
\end{enumerate}
where we also use $a_n(S)=a_n(S')$ for compact $S$ for (iii), 
and $h_n(S)=h_n(S')$ for $S\in\calL$ for (iv), 
%
see~\cite[11.7.4 \& 11.7.8]{Pietsch-ideals}. 
Moreover, using (i), (iii) and 
\[
b_n(\id\co \ell^m_p\to\ell^m_q)= (n+1)^{\frac1q-\frac1p} \qquad\text{ for $p<q$ and $n< m$, }
\]
see~\cite[Theorem~7.4]{Pietsch-s}, we even obtain that the direct corollary 
\begin{equation}
\max\bset{c_n(S),\, d_n(S)} 
\;\le\; e\, (n+1)\, b_n(S) 
\end{equation}
of Theorem~\ref{thm:main} is sharp up to the constant. 

\medskip

\begin{remark}[Entropy numbers] \label{rem:s-numbers-entropy}
The \emph{entropy numbers} 
of an operator $S\in\calL(X,Y)$ are defined by
\[
 \eps_n(S) \,:=\, 
 \inf_{y_1, \dots, y_{2^{n}} \in Y} \, \sup_{x\in B_X} \, \min_{i=1,\dots,2^n} \|Sx-y_i\|,   
\]
and do only satisfy the conditions 
{\rm (S1)--(S3)}.
The entropy numbers are therefore not an s-number sequence, see~\cite{CP77} or~\cite[6.2.4]{Pietsch07}. 
For example, they satisfy 
$\eps_n(\id_\R)=2^{-n}$ and are therefore never zero, 
although $\rank(I_\R)=1$. 
More generally, we have 
$\eps_n(\id_X)\asymp 2^{-n/m}$ 
for every Banach space $X$ with $\dim(X)=m$, 
see \cite[12.1.13]{Pietsch-ideals}. 
The theory of entropy numbers is also well-established, see e.g.~\cite{CS90} for a comprehensive treatment. 
They are often used to lower bound other s-numbers (or widths) 
by means of \emph{Carl's inequality}, 
$$
\eps_n(S) \,\le\, C_s\, n^{-s} \cdot \sup_{k\le n}\, (k+1)^s\cdot 
\min\bset{c_k(S), d_k(S)}
$$ 
with a constant $C_s>0$ only depending on $s>0$, 
see \cite{Carl81}. 
Moreover, there is the bound 
\[
\max\bset{c_n(S),\, d_n(S)} 
\;\le\; (n+1)\cdot\eps_n(S) 
\]
from \cite[12.3.2]{Pietsch-ideals}. 
Using Theorem~\ref{thm:main} 
and the known $h_n(S)\le 2\,\eps_n(S)$, see \cite[12.3.1]{Pietsch-ideals}, 
we now also obtain 
\begin{equation}
a_n(S) \;\le\; 2e\,(n+1)\; \eps_n(S)
\end{equation}
for all $S\in\calL$, with the corresponding improvements for Hilbert spaces. 
\end{remark}

\goodbreak

\section{A new determinant quantity}\label{sec:delta}

For $k \in \N$ and $S \in \mathcal{L}(X,Y)$, define
\[
  \Delta_k(S) \;:=\; \sup\set{\, |\det(BSA)| \co\; 
  \parbox{42mm}{
  $A \in \mathcal{L}(\ell_2^{k}, X), \norm{A}\le1$,\\[1mm] 
  $B \in \mathcal{L}(Y, \ell_2^{k}), \norm{B} \le 1$}\,
  }, 
\]
and $\Delta_0(S) := 1$, where we identify operators on $\ell_2^k$ with their matrices in the standard basis, as usual.
Since 
$|\det(BSA)| = \prod_{i=0}^{k-1} \sigma_i(BSA) \le \norm{BSA}^k$,
we have $\Delta_k(S) \le \norm{S}^k < \infty$.

\medskip

The $\Delta_k$ are mainly motivated by the 
\emph{Grothendieck numbers} 
\[
  \Gamma_k(S) \,:=\, \sup\Bigl\{\, \bigl|\det\bigl(
  \langle S x_j, b_i \rangle
  \bigr)_{i,j=1}^{k}\bigr|^{1/k}
      \co\, x_1,\dots,x_k \in B_X,\ 
      b_1,\dots,b_k \in B_{Y'} \Bigr\},
\]
which were introduced by Pajor and Tomczak-Jaegermann~\cite[eq.~(2.7)]{PT89} 
in the context of ``volume ratio numbers'' and their relation to Gelfand and entropy numbers, 
see also~\cite{Geiss90} and Remark~\ref{rem:Grothendieck-space}. 
Although $\Delta_n$ does not appear explicitly in~\cite{PT89}, the idea for it clearly stems from reading the proof of their Theorem~3.1.
Note that 
$|\det(T)|$ for 
$T\in\calL(\ell_2^k,\ell_2^k)$ equals the ratio of the Lebesgue measures of $T(B_{\ell_2^k})$ and $B_{\ell_2^k}$. 

The numbers $\Delta_n$ and $\Gamma_n$ are related. 
In fact, it is not hard to see that 
\[
\Delta_{n}(S) \,=\, \Gamma_n(S)^n 
\,=\, \prod_{k=0}^{n-1} h_k(S)
\]
for operators $S\in\calL(H,K)$ for Hilbert spaces~$H,K$, where the~$h_k(S)$ coincide with the singular values~$\sigma_k(S)$.
For general $S\in\calL(X,Y)$, we only have 
\begin{equation}\label{eq:Delta-Gamma}
  \frac{\Gamma_n(S)}{n} \;\le\; \Delta_n(S)^{1/n} \;\le\; \Gamma_n(S). 
\end{equation}
The upper bound is seen by 
$(BSA)_{ij}=\ipr{Sx_j,b_i}$ with 
$x_j=Ae_j \in B_X$ and $b_i=B'e_i \in B_{Y'}$ 
for contractions $A \in \calL(\ell_2^n, X)$, $B \in \calL(Y,\ell_2^n)$, and the \emph{dual} $B'\in\calL(\ell_2^n,Y')$. 
That is, the individual constraints $x_k\in B_X$ and $b_k\in B_{Y'}$ in $\Gamma_n$ 
are replaced in $\Delta_n$ by the joint conditions $\bigl\|\sum_j g_j\, x_j\bigr\| \le \norm{g}_2$ and $\sum_i |\ipr{y,b_i}|^2 \le \norm{y}^2$. 
The other bound follows from 
$A\xi \,:=\, \frac{1}{\sqrt n}\sum_{i=1}^n \xi_i\,x_i$ and 
$By \,:=\, \frac{1}{\sqrt n}\bigl(\ipr{y,b_j}\bigr)_{j=1}^n$ 
being 
contractions for $x_k\in B_X$ and $b_k\in B_{Y'}$.

Both bounds in~\eqref{eq:Delta-Gamma} are sharp, since 
$\Gamma_n(I_{\ell_2})\,=\,\Delta_{n}(I_{\ell_2})^{1/n}   =1$ 
and 
$\Gamma_{n}(\id\co\ell_1^n\to\ell_\infty^n) \,=\, n\, \Delta_n(\id\co\ell_1^n\to\ell_\infty^n)^{1/n} =1$, 
see Proposition~\ref{prop:Delta-Gamma} below. 

\medskip 

Concerning bounds with s-numbers, the $\Gamma_n$ were (implicitly) used, e.g., to prove~\eqref{eq:bound-geom} based on 
\[
\prod_{k=0}^{n-1} c_k(S)
\,\le\, \Gamma_n(S)^n 
\,\le\, n^n \prod_{k=0}^{n-1} h_k(S), 
\]
see~\cite[11.12.3]{Pietsch-ideals} or~\cite{U24}.
In contrast, Lemmas~\ref{lem:ratio-u} and~\ref{lem:ratio-l} below will show, in particular, that the $\Delta_n$ satisfy 
\[
\prod_{k=0}^{n-1} \frac{a_k(S)}{e(k+1)}
\,\le\, \Delta_n(S) 
\,\le\, \prod_{k=0}^{n-1} h_k(S).
\]

\medskip

Actually, we show that consecutive ratios of $\Delta_k(S)$ are bounded from above by 
$h_n$, 
and from below in terms of $a_n$. 
This allows us to get rid of the geometric means in the s-number bound. 

The proof uses the alternative characterization of the Hilbert numbers 
\begin{equation}\label{eq:def-hn}
h_n(S) \;=\; \sup\left\{\, \rho\ge0 \co\; 
  \rho I_{\ell_2^{n+1}} = BSA, \quad
  \parbox{47mm}{
  $A \in \mathcal{L}(\ell_2^{n+1}, X),\ \norm{A}\le1$,\\[1mm]  
  $B \in \mathcal{L}(Y, \ell_2^{n+1}),\ \norm{B} \le 1$}\,\right\}, 
\end{equation}
see~\cite[Satz~5]{Bauhardt}, 
which also implies 
$h_n(S)^{n+1} \le \Delta_{n+1}(S)$ 
for every $S \in \mathcal{L}$ and every $n \in \N_0$, 
since $\det(\rho\, I_{\ell_2^{n+1}}) = \rho^{n+1}$. 
Hence, we obtain from~\cite[6.2.2.2]{Pietsch07} that 
$$\Delta_{n+1}(S)>0 \iff h_n(S)>0 \iff \rank(S)>n.$$

\medskip
\goodbreak

The first lemma shows that consecutive $\Delta_n$ 
decay at least by a factor~$h_n$. 

\begin{lemma}\label{lem:ratio-u}
For every $S \in \mathcal{L}(X,Y)$ and every $n \in \N_0$,
\[
  \Delta_{n+1}(S) \;\le\; h_n(S)\, \Delta_n(S) .
\]
\end{lemma}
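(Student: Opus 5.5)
The plan is to fix an arbitrary pair of contractions $A \in \calL(\ell_2^{n+1},X)$ and $B\in\calL(Y,\ell_2^{n+1})$, and to bound $|\det(BSA)|$ by $h_n(S)\,\Delta_n(S)$. Taking the supremum over $A$ and $B$ then gives the lemma. The idea is to split $|\det(BSA)| = \prod_{i=0}^{n}\sigma_i(BSA)$ into the product of the first $n$ singular values, which we bound by $\Delta_n(S)$, and the last one, $\sigma_n(BSA)$, which we bound by $h_n(S)$ using the characterization \eqref{eq:def-hn}.

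\emph{Diagonalize first.} Write $T:=BSA$ and take a singular value decomposition $T = U\Sigma V^*$ with unitary $U,V$ on $\ell_2^{n+1}$ and $\Sigma=\diag(\sigma_0,\dots,\sigma_n)$, where $\sigma_0\ge\dots\ge\sigma_n\ge0$. Set $\tilde A := AV$ and $\tilde B:=U^*B$. These are still contractions, $\tilde B S\tilde A=\Sigma$, and $|\det \tilde B S \tilde A|=|\det T|$. So we may assume without loss of generality that $BSA=\Sigma$ is diagonal.

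\emph{Bound the first $n$ factors by $\Delta_n(S)$.} Let $J\co\ell_2^n\to\ell_2^{n+1}$ be the canonical embedding onto the first $n$ coordinates, and let $J^*$ be the coordinate projection. Then $AJ$ and $J^*B$ are contractions, and
\[
J^*BSAJ=\diag(\sigma_0,\dots,\sigma_{n-1}),
\]
so $\prod_{i<n}\sigma_i\le\Delta_n(S)$. For $n=0$ this product is empty and equals $\Delta_0(S)=1$.

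\emph{Bound the last factor by $h_n(S)$.} If $\sigma_n=0$, there is nothing to show. Otherwise let $D:=\diag(\sigma_n/\sigma_0,\dots,\sigma_n/\sigma_n)$. This is a contraction, since $\sigma_n\le\sigma_i$ for all $i$. Then $(DB)SA = D\Sigma = \sigma_n I_{\ell_2^{n+1}}$, and $DB$ and $A$ are both contractions. By \eqref{eq:def-hn}, this gives $\sigma_n\le h_n(S)$.

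Multiplying the two bounds yields $|\det(BSA)|\le h_n(S)\,\Delta_n(S)$, and the lemma follows by taking the supremum. The only points needing care are the reduction to the diagonal case (unitary invariance of contractions and of $|\det|$) and the scaling by $D$ that turns the smallest singular value into an exact multiple of the identity. Both are routine, so I do not expect a genuine obstacle here.
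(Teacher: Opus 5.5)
Your proposal is correct and follows essentially the same route as the paper: an SVD of $BSA$, compression to the top $n$ singular directions to get $\prod_{i<n}\sigma_i\le\Delta_n(S)$, and rescaling by $\sigma_n\Sigma^{-1}$ to produce $\sigma_n I_{\ell_2^{n+1}}$ and hence $\sigma_n\le h_n(S)$ via \eqref{eq:def-hn}. Your direct treatment of the case $\sigma_n=0$ is a slightly cleaner replacement for the paper's appeal to $\Delta_{n+1}(S)=0 \iff h_n(S)=0$.
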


\begin{proof}
Let $A \in \mathcal{L}(\ell_2^{n+1}, X)$ and $B \in \mathcal{L}(Y, \ell_2^{n+1})$ be arbitrary contractions, 
and let $\sigma_0 \ge \dots \ge \sigma_{n} \ge 0$ be the singular values of $T := BSA \in \K^{(n+1)\times(n+1)}$. 
Let $T = P \Sigma Q^*$ be a singular value decomposition, 
with $P, Q$ unitary and 
$\Sigma = \diag(\sigma_0, \dots, \sigma_{n})$, 
and let $U, V \in \K^{(n+1) \times n}$ be the first $n$ columns of~$Q$ and~$P$. 
Then, $U^*U = V^*V = I_n$ and $V^* T U = \diag(\sigma_0, \dots, \sigma_{n-1})$. 
Since $AU$ and $V^*B$ are contractions, we get 
\[
  \prod_{i=0}^{n-1} \sigma_i \;=\; |\det(V^* T U)| \;=\; \bigl|\det\bigl((V^*B)\, S\, (AU)\bigr)\bigr| \;\le\; \Delta_n(S) .
\]
Now, since $\Delta_{n+1}(S)=0 \iff h_n(S)=0$, 
we can assume that $\det(T)\neq0$.
Then, $W := \sigma_n\, \Sigma^{-1}$ is a contraction, 
and $(W P^* B)\, S\, (A Q) = W P^* T Q = W \Sigma = \sigma_n\, I_{\ell_2^{n+1}}$, with $W P^* B$ and $A Q$ contractions. 
Hence, we obtain $\sigma_n \le h_n(S)$ from 
the characterization of $h_n$ given in~\eqref{eq:def-hn}. 
This implies
\[
  |\det T| \;=\; \sigma_n \prod_{i=0}^{n-1} \sigma_i \;\le\; h_n(S)\, \Delta_n(S) .
\]
Taking the supremum over all contractions 
$A$ and $B$ yields the result. \\
\end{proof}

\medskip

The next lemma shows that the rate of decay of $\Delta_n$ 
can also be bounded from below in terms of $a_n$.
Note that the proof indeed gives an upper bound on 
$\|S-L\|$ for a specific rank-$n$ operator $L$. 

\begin{lemma} \label{lem:ratio-l}
For all $S\in\calL$ and $n\in\N_0$, we have
\[
\Delta_{n+1}(S)
\;\ge\; \frac{a_n(S)}{e (n+1)}\; \Delta_n(S).
\]
\end{lemma}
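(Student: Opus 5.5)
The plan is to build an explicit rank-$n$ operator $L$ from an (almost) extremal pair for $\Delta_n(S)$, and then to use a single extra direction witnessing $\norm{S-L}$ to enlarge that pair to an admissible pair for $\Delta_{n+1}(S)$. A Schur complement computation will show that the determinant grows by a factor comparable to $\norm{S-L}/(e(n+1))$.

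\emph{Setup.} If $\Delta_n(S)=0$ there is nothing to prove. For $n=0$, $\Delta_0(S)=1$ and the argument below still works with an empty $T$ and $L=0$. So assume $\Delta_n(S)>0$, fix $\delta>0$, and choose contractions $A\in\calL(\ell_2^n,X)$ and $B\in\calL(Y,\ell_2^n)$ with $|\det T|\ge(1-\delta)\Delta_n(S)$, where $T:=BSA$. Then $T$ is invertible, and we set
\[
L:=SA\,T^{-1}BS,
\]
which has rank at most $n$. Hence $a_n(S)\le\norm{S-L}$. Next pick $x\in B_X$ and $y'\in B_{Y'}$ with $|\ipr{(S-L)x,y'}|\ge(1-\delta)\norm{S-L}$; this is possible by Hahn--Banach.

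\emph{Enlarged pair.} For $\alpha,\beta\ge0$ with $\alpha^2+\beta^2=1$, define $\tilde A\in\calL(\ell_2^{n+1},X)$ and $\tilde B\in\calL(Y,\ell_2^{n+1})$ by
\[
\tilde A(\xi,\eta):=\alpha A\xi+\beta\eta\,x,\qquad \tilde By:=\bigl(\alpha By,\ \beta\ipr{y,y'}\bigr).
\]
Both are contractions. For $\tilde A$, Cauchy--Schwarz gives $\norm{\tilde A(\xi,\eta)}\le\alpha\norm{\xi}_2+\beta|\eta|\le\norm{(\xi,\eta)}_2$. For $\tilde B$, we have $\norm{\tilde By}_2^2\le\alpha^2\norm{y}^2+\beta^2\norm{y}^2$.

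\emph{Determinant computation.} The matrix $\tilde BS\tilde A$ has block form
\[
\begin{pmatrix}\alpha^2T & \alpha\beta\,BSx\\ \alpha\beta\,(\ipr{SAe_j,y'})_j & \beta^2\ipr{Sx,y'}\end{pmatrix}.
\]
By the Schur complement formula, its determinant equals
\[
\alpha^{2n}\det T\cdot\beta^2\bigl(\ipr{Sx,y'}-\ipr{SAT^{-1}BSx,y'}\bigr)=\alpha^{2n}\beta^2\det T\,\ipr{(S-L)x,y'}.
\]
Therefore
\[
\Delta_{n+1}(S)\ge\alpha^{2n}\beta^2(1-\delta)^2\Delta_n(S)\,a_n(S).
\]

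\emph{Optimizing the weights.} Choose $\beta^2=1/(n+1)$ and $\alpha^2=n/(n+1)$. Then $\alpha^{2n}=(1+1/n)^{-n}\ge e^{-1}$, so
\[
\Delta_{n+1}(S)\ge\frac{(1-\delta)^2\,a_n(S)}{e(n+1)}\,\Delta_n(S),
\]
and letting $\delta\to0$ gives the lemma.

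The main obstacle is choosing the extension so that the new determinant factors exactly through $\ipr{(S-L)x,y'}$; this is what forces $L$ to be of the form $SAT^{-1}BS$. The key point is recognizing the Schur complement. After that, the only remaining issue is balancing the weights $\alpha,\beta$ to keep both enlarged maps contractive, which is where the factor $e(n+1)$ comes from.
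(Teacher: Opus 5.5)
Your proposal is correct and follows the paper's proof essentially line by line. You use the same near-extremal pair, the same rank-$n$ operator $L=SA(BSA)^{-1}BS$, the same bordered pair (your $\alpha,\beta$ are the paper's $\sqrt{1-\lambda},\sqrt{\lambda}$), the same Schur-complement identity, and the same choice $\lambda=\beta^2=1/(n+1)$. Your use of absolute values, $|\det T|$ and $|\ipr{(S-L)x,y'}|$, is, if anything, slightly cleaner in the complex case.
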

 
\begin{proof}
We may assume $\Delta_n(S)>0$.
Let $\eps\in(0,1)$ and choose $A\in\calL(\ell_2^n,X)$ and
$B\in\calL(Y,\ell_2^n)$ with $\|A\|,\|B\|\le1$ and
$\det(BSA)\ge(1-\eps)\,\Delta_n(S)>0$. 
In particular, $BSA$ is invertible.
The operator
\[
L \,:=\, SA(BSA)^{-1}BS \,\in\, \calL(X,Y)
\]
satisfies $\rank(L)\le n$, and therefore $\|S-L\|\ge a_n(S)$.
Choose $x\in B_X$ and $b\in B_{Y'}$ with
$\ipr{(S-L)x,b}\ge(1-\eps)\,\|S-L\|$.
For $\lambda\in(0,1)$, define
$A_0\in\calL(\ell_2^{n+1},X)$ and $B_0\in\calL(Y,\ell_2^{n+1})$ by
\[
A_0(\xi,\tau) \,:=\, \sqrt{1-\lambda}\,A\xi+\sqrt{\lambda}\,\tau x,
\qquad
B_0y \,:=\, \left(\sqrt{1-\lambda}\,By,\; \sqrt{\lambda}\,\ipr{y,b}\right), 
\]
where $\xi\in\ell_2^n$ and $\tau\in\R$. 
By the triangle and Cauchy-Schwarz inequality (applied to~$\R^2$), we obtain 
\[\begin{split}
\|A_0(\xi,\tau)\| 
&\le \sqrt{1-\lambda}\, \|A\xi\|+\sqrt{\lambda}\,|\tau|
\le \bigl(\|\xi\|^2+|\tau|^2\bigr)^{1/2} 
\;=\; \norm{\xi \oplus \tau}_2,
\end{split} 
\]
and
\[
\|B_0y\|^2 = (1-\lambda)\|By\|^2+\lambda\,|\ipr{y,b}|^2 \le \|y\|^2, 
\] 
so that $A_0,B_0$ are contractions.  
Hence, 
$
\Delta_{n+1}(S) \,\ge\, \det(B_0SA_0).
$

The matrix of $B_0SA_0$ has the block form
\[
B_0SA_0 \,=\,
\begin{pmatrix}
(1-\lambda)\, BSA & \sqrt{\lambda(1-\lambda)}\; BSx \\[1mm]
\sqrt{\lambda(1-\lambda)}\; 
\bigl(\ipr{SAe_k,b}\bigr)_{k=1}^n 
& \lambda\,\ipr{Sx,b}
\end{pmatrix}.
\]
Its determinant 
can be computed using the following special case of the Schur determinant formula: 
if $T$ is an invertible $k\times k$ matrix, $u,v\in\ell_2^k$
and $s$ is a scalar, then
\begin{equation}\label{eq:schur}
\det\begin{pmatrix} T & u\\[0.5mm] v^\top & s\end{pmatrix}
\,=\, \det(T)\cdot\bigl(s-v^\top T^{-1}u\bigr).
\end{equation}
This shows that 
\[\begin{split}
\det(B_0SA_0)
\,&=\, (1-\lambda)^n \det(BSA)\cdot
\lambda\,\Bigl(\ipr{Sx,b}-\ipr{SA(BSA)^{-1}BSx,\,b}\Bigr) \\
\,&=\, (1-\lambda)^n\, \lambda\, \det(BSA)\cdot \ipr{(S-L)x,\,b} \\
\,&\ge\, (1-\eps)^2\, (1-\lambda)^n\, \lambda\; \Delta_n(S)\;\norm{S-L}.
\end{split}\]
The function $\lambda\mapsto(1-\lambda)^n\lambda$ is maximized at
$\lambda=\frac{1}{n+1}$ with maximum 
$\frac{n^n}{(n+1)^{n+1}}\ge \frac{1}{e(n+1)}$. 
Hence, $\|S-L\|\ge a_n(S)$ and $\eps\to0$ finish the proof. \\
\end{proof}

\bigskip

We finally show that the lower bound in~\eqref{eq:Delta-Gamma} is sharp. 

\begin{proposition}\label{prop:Delta-Gamma}
For all $n\ge1$, we have
\[
\Gamma_n(\id\co\ell_1^n\to\ell_\infty^n) \,=\, 1
\qquad\text{and}\qquad
\Delta_n(\id\co\ell_1^n\to\ell_\infty^n)^{1/n} \,=\, \frac1n \,.
\]
In particular, the bound $\Gamma_n(S)\le n\,\Delta_n(S)^{1/n}$ is sharp.
\end{proposition}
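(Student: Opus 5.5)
For $\Gamma_n$ I will use the trivial upper bound $\Gamma_n(S)\le$ Hadamard-type estimate together with an explicit choice. The entries satisfy $|\ipr{x_j,b_i}|\le \|x_j\|_1\|b_i\|_\infty\le 1$. The determinant is multilinear in the columns $x_j$, and $B_{\ell_1^n}$ is the convex hull of $\pm e_k$, so the supremum is attained at extreme points. Hence each column of $(\ipr{x_j,b_i})$ is $\pm$ a row-entry vector $\pm(b_i)_{k_j}$, i.e. the matrix is $(\pm (b_i)_{k_j})$. If two $k_j$ coincide the determinant vanishes. Otherwise it is $\pm$ the determinant of the matrix $((b_i)_k)$, a matrix with entries bounded by $1$. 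That gives only the bound $n^{n/2}$, which is not enough, so I recompute.

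Doing the same reduction for the $b_i$ (extreme points of $B_{\ell_\infty^n}$ are sign vectors) does not obviously give $1$ either. I should check the paper's normalization instead. The claim $\Gamma_n(\id\co \ell_1^n\to\ell_\infty^n)=1$ together with $\Gamma_n\ge h_{n-1}$-type bounds suggests $\Gamma_n$ here is bounded by $\|\id\|=1$ in a stronger way. On reflection, the correct lower bound $\Gamma_n\ge 1$ comes from $x_j=e_j$, $b_i=e_i$, which give the identity matrix with determinant $1$. For the upper bound, the intended route is the general inequality $\Gamma_n(S)\le n\,\Delta_n(S)^{1/n}$ from \eqref{eq:Delta-Gamma} combined with $\Delta_n^{1/n}\le 1/n$. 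So the plan is: prove $\Delta_n(\id)\le n^{-n}$, and then $1\le\Gamma_n\le n\cdot\frac1n=1$ gives both equalities once $\Delta_n\ge n^{-n}$ is also shown. That lower bound follows from the left inequality in \eqref{eq:Delta-Gamma} together with $\Gamma_n\ge1$.

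The main step, and the expected obstacle, is $\Delta_n(\id\co\ell_1^n\to\ell_\infty^n)\le n^{-n}$. Write $T=BA$ with contractions $A\co\ell_2^n\to\ell_1^n$ and $B\co\ell_\infty^n\to\ell_2^n$, viewed as $n\times n$ matrices. Then $|\det T|=|\det A|\,|\det B|$, which splits the problem into two volume problems.

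For $A$: $A(B_{\ell_2^n})\subseteq B_{\ell_1^n}$. I use the AM–GM/Hadamard argument. The columns satisfy $\sum_j\|Ae_j\|_1^2\le\ldots$; more cleanly, for the transpose, $\|A\|_{2\to1}=\|A^\top\|_{\infty\to2}$, so it suffices to show $|\det M|\le n^{-n/2}$ whenever $\|M\|_{\infty\to2}\le1$. Averaging $\|M\varepsilon\|_2^2$ over random signs $\varepsilon$ gives $\sum_{j}\|Me_j\|_2^2\le1$, i.e. the Frobenius norm is at most $1$. Hadamard's inequality and AM–GM then give $|\det M|\le\prod_j\|Me_j\|_2\le (1/n)^{n/2}$.

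The same applies to $B$ directly, so $|\det T|\le n^{-n/2}n^{-n/2}=n^{-n}$. The determinant of the transpose is unchanged and these bounds hold over $\K=\R$ or $\C$, since the sign-averaging works over both fields. Combining the three steps yields the proposition, and sharpness of $\Gamma_n\le n\Delta_n^{1/n}$ is immediate.
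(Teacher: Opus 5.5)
Your final argument is correct and is essentially the paper's proof. You get $\Gamma_n\ge1$ from $x_j=e_j$, $b_i=e_i$; you get $\Delta_n\le n^{-n}$ from the sign-averaging bound $\|C\|_{\rm HS}\le\norm{C\co\ell_\infty^n\to\ell_2^n}$ applied to $A^\top$ and $B$ followed by AM--GM (you go through Hadamard's inequality on the columns, the paper through the singular values); and the chain $1\le\Gamma_n\le n\,\Delta_n^{1/n}\le1$ from \eqref{eq:Delta-Gamma} then gives both equalities.

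Your abandoned direct attempt stalled only because you took the $b_i$ in $B_{\ell_\infty^n}$. They actually range over $B_{(\ell_\infty^n)'}=B_{\ell_1^n}$, whose extreme points are $\pm e_k$, so that extreme-point reduction would have given $\Gamma_n\le1$ immediately.
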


\begin{proof}
For the lower bound on $\Gamma_n(\id)$, take $x_i:=e_i\in B_{\ell_1^n}$ and
$a_j:=e_j\in B_{\ell_1^n}=B_{(\ell_\infty^n)'}$, so that
$\bigl(\ipr{x_i,a_j}\bigr)$ is the identity matrix.


For the upper bound on $\Delta_n(\id)$, 
we first note that 
for every~$C\in\K^{n\times n}$, 
we have
\begin{equation}\label{eq:sharp-C}
  \|C\|_{\rm HS}^2 \;:=\; \sum_{k=1}^n \norm{Ce_k}_2^2 \;\le\; \norm{C\co \ell_\infty^n\to\ell_2^n}^2, 
\end{equation} 
with the \emph{Hilbert-Schmidt (or 2-summing) norm} $\|C\|_{\rm HS}$, see~\cite[15.5.5]{Pietsch-ideals},
which is an elementary instance of the little Grothendieck theorem. 
Indeed, for independent uniform signs $\varepsilon_k \in \{-1,1\}$, we have 
$\sum_{k} \norm{Ce_k}_2^2 = \mathbb{E}\, \bigl\|\sum_{k} \varepsilon_k\, Ce_k\bigr\|_2^2$, 
as well as $\norm{C\bigl(\sum_{k} \varepsilon_k e_k\bigr)}_2 \le \norm{C\co \ell_\infty^n\to\ell_2^n}$ for all $\varepsilon_k \in \{-1,1\}$, and the inequality follows since the average is smaller than the maximum over $\varepsilon_k$. 

Now, let
$A\co\ell_2^n\to\ell_1^n$ and $B\co\ell_\infty^n\to\ell_2^n$ with
$\|A\|,\|B\|\le1$, 
and apply~\eqref{eq:sharp-C} to $A^\top$ and $B$, 
so that $\|A\|_{\rm HS},\|B\|_{\rm HS}\le1$. 
The AM-GM inequality for the squared singular values of $A$, considered as a map on $\ell_2^n$, yields 
\[
|\det A| \,=\, \prod_{k=1}^n\sigma_k(A)
\,\le\, \Bigl(\frac1n\sum_{k=1}^n\sigma_k(A)^2\Bigr)^{n/2}
\,=\, \Bigl(\frac{\|A\|_{\rm HS}^2}{n}\Bigr)^{n/2}
\,\le\, n^{-n/2}, 
\]
and similarly for $B$. 
Hence, $|\det(BA)|=|\det B|\,|\det A|\le n^{-n/2}$. 
Since this holds for all $A$ and $B$, we obtain $\Delta_n(I_{\ell_1})\le n^{-n/2}$. \\
\end{proof}

\medskip

\begin{remark}[Grothendieck numbers of a space]\label{rem:Grothendieck-space} 
The Grothendieck numbers 
of a space~$X$, 
obtained from the above definition of~$\Gamma_n(S)$ with $S=I_X$, 
have been used already by Grothendieck~\cite{Grothendieck56}.  
They can be used, e.g., to bound the volume ratio of finite-dimensional spaces or to characterize ``weak Hilbert spaces'', 
see \cite[6.1.11]{Pietsch07} for details and references. 
It would be interesting to see whether the quantity $\Delta_n(I_X)$ plays a similar role. 
\end{remark}

\section{Proof of Theorem~\ref{thm:main}}
\label{sec:proof}

First, note that $\rank(S)\le n$ implies $a_n(S)=0$, and there is nothing to prove. 
So assume $\rank(S)>n$ throughout, which also implies $\Delta_n(S)>0$.

The proof of the first (general) part of Theorem~\ref{thm:main} follows immediately from Lemmas~\ref{lem:ratio-u} and~\ref{lem:ratio-l} since 
\[
h_n(S)\,\Delta_n(S)
\,\ge\, \Delta_{n+1}(S)
\,\ge\, 
\frac{a_n(S)}{e (n+1)}\;
\Delta_n(S),
\]
and dividing by $\Delta_n(S)$ finishes the proof. 

\medskip 

If the domain or the codomain is a Hilbert space, 
then some modifications are needed in the proof. 
However, it turns out that, by slightly changing 
$\Delta_n$ to reflect the Hilbert structure, 
these modifications are minimal. 

\medskip

Let first $Y$ be a Hilbert space, 
and recall that $\ipr{\cdot,\cdot}_Y$ denotes its inner product (while $\ipr{x,b}$ denotes the duality pairing of $x\in X$ and $b\in X'$),  
and that 
$B^*\in\calL(\ell_2^n,Y)$ is the adjoint of $B\in \mathcal{L}(Y, \ell_2^{k})$, defined by
$\ipr{By,\xi}_Y=\ipr{y,B^*\xi}_{\ell_2^n}$ for $y\in Y$ and $\xi\in\ell_2^n$. 
We define
\[
  \Delta^H_k(S) \;:=\; 
  \sup\set{\, |\det(BSA)| \co\; 
  \parbox{45mm}{
  $A \in \mathcal{L}(\ell_2^{k}, X),\  \norm{A}\le1$,\\[1mm] 
  $B \in \mathcal{L}(Y, \ell_2^{k}), \ BB^* = I_{\ell_2^k}$}\,
  }. 
\]
That is, $\Delta_k(S)$ with the supremum restricted to pairs whose $B \colon Y \to \ell_2^k$ is a coisometry (i.e., $BB^* = I_k$), 
which means that the functionals $B^*e_1, \dots, B^*e_k$ 
are orthonormal. Coisometries are contractions, so that $\Delta^H_k(S) \le \Delta_k(S)$, and they are preserved under compressions, since $(V^*B)(V^*B)^* = V^*(BB^*)V = I_m$ for $V$ with orthonormal columns. Hence, the proof of Lemma~\ref{lem:ratio-u} applies verbatim to $\Delta^H_k$, 
i.e., we have 
\[
\Delta^H_{n+1}(S) \,\le\, h_n(S)\,\Delta^H_n(S). 
\] 
It remains to show 
\begin{equation}\label{eq:hilbert-growth}
  \Delta^H_{n+1}(S) \;\ge\; 
   \frac{a_n(S)}{\sqrt{e (n+1)}}\; \Delta^H_n(S).
\end{equation}

Again, $a_n(S)>0$ implies $\Delta^H_{n+1}(S)>0$. 
Let $\eps\in(0,1)$ and choose $A,B$ with
$|\det(BSA)|\ge(1-\eps)\,\Delta_n^H(S)>0$, and put
$L:=SA(BSA)^{-1}BS$ with $\rank(L)\le n$, so that $\|S-L\|\ge a_n(S)$. 
The key observation is that
\[
B\,(S-L) \,=\, BS-BSA(BSA)^{-1}BS \,=\, 0,
\]
i.e., the range of $S-L$ is contained in the kernel~$\ker(B)$ of $B$.
Choose $x\in B_X$ with $\|(S-L)x\|\ge(1-\eps)\|S-L\|>0$ and set
$b:=\frac{(S-L)x}{\|(S-L)x\|}
\in\ker(B)\subset Y$. 
This implies that $\ipr{b,B^*e_i}_Y=\ipr{Bb,e_i}_{\ell_2^n}=0$, i.e., 
that $b$ is orthogonal to the functionals represented by (the rows of) $B$. 
For $\lambda\in(0,1)$, define
\[
A_0(\xi,\tau) \,:=\, \sqrt{1-\lambda}\,A\xi+\sqrt{\lambda}\,\tau x,
\qquad
B_0y \,:=\, \bigl(By,\; \ipr{y,b}_Y\bigr).
\]
(Compared to above, the weights 
are needed only on one side.)

Then $\|A_0\|\le1$ as before, and $B_0$ is again a coisometry, since 
$B_0^*e_k=B^*e_k$, $k=1,\dots,n$, and $B_0^*e_{n+1}=b$ are orthonormal. 
We have 
\[
B_0SA_0 \,=\,
\begin{pmatrix}
\sqrt{1-\lambda}\, BSA & \sqrt{\lambda}\; BSx \\[1mm]
\sqrt{1-\lambda}\; 
\bigl(\ipr{SAe_k,b}_Y\bigr)_{k=1}^n 
& \sqrt{\lambda}\,\ipr{Sx,b}_Y
\end{pmatrix}.
\]
Hence, 
using \eqref{eq:schur}, we obtain similar to above that 
\[\begin{split}
\Delta^H_{n+1}(S) 
\,&\ge\, |\det(T_0)|
\,=\, (1-\lambda)^{n/2}\,\lambda^{1/2}\,|\det(BSA)|\cdot
\bigl|\ipr{(S-L)x,\,b}_Y\bigr| \\
\,&\ge\, (1-\eps)^2\,(1-\lambda)^{n/2}\,\lambda^{1/2}\;
\Delta_n^H(S)\; \|S-L\|.
\end{split}\]
Together with~\eqref{eq:hilbert-growth}, this shows
\[
h_n(S) \,\ge\, \frac{\Delta^H_{n+1}(S)}{\Delta^H_{n}(S)}
\,\ge\, (1-\eps)^2\,(1-\lambda)^{n/2}\,\lambda^{1/2}\; \|S-L\|, 
\]
and the result follows from $\lambda=\frac{1}{n+1}$, 
$a_{n}(S)\le\|S-L\|$, 
and $\eps\to0$.
 
Let now $X$ be a Hilbert space. 
In this case, we restrict to isometries $A\co\ell_2^n\to X$ and $\|B\|\le1$. For near-extremal $A$ and $B$, and $L$ as above, we
now have
\[
(S-L)\,A \,=\, SA-SA(BSA)^{-1}BSA \,=\, 0,
\]
i.e., $S-L$ vanishes on $\mathrm{ran}(A)$, so that
$$\sup\bigl\{\|(S-L)x\|\co x\in B_X,\; x\perp\mathrm{ran}(A)\bigr\}
=\|S-L\|\ge a_n(S).$$
Choose such a (normalized) $x$ with $\|(S-L)x\|\ge(1-\eps)\|S-L\|$ and $b\in B_{Y'}$
with $\ipr{(S-L)x,b}_Y=\|(S-L)x\|$, and define
\[
A_0(\xi,\tau) \,:=\, A\xi+\tau x,
\qquad
B_0y \,:=\, \bigl(\sqrt{1-\lambda}\,By,\; \sqrt{\lambda}\,\ipr{y,b}_Y\bigr).
\]
Then, $\|B_0\|\le1$, and $A_0$ is an isometry, since 
$A_0e_k=Ae_k$, $k=1,\dots,n$, and $A_0e_{n+1}=x$ are orthonormal. 
The remaining proof is identical.

\bigskip

\section{Widths of nonsymmetric convex sets} \label{sec:sets}

As indicated above, s-numbers of an identity $I_X$ can be understood as $n$-width of the unit ball $B_X$ in $Y$. It is clearly of interest to extend the aforementioned bounds to widths of nonsymmetric convex sets, see e.g.~\cite{Pinkus,Tikh60}. 
However, it seems that there is still no general/axiomatic theory of $n$-widths that allows for determining the smallest and largest among them. 
Here, we follow the approach of~\cite{KNU} to define ``s-numbers of $S\in\calL(X,Y)$ on $F\subset X$'' 
which aim on approximation $Sf$ only for $f\in F$, 
see also the references given in~\cite{KNU}. 
(One may also call them ``widths of $F$ with respect to a mapping $S$''.)


Let $S\in\calL(X,Y)$ and let $F\subseteq X$ be nonempty and convex with
$S(F)$ bounded. Then, we define by 
\begin{align*}
a_n(S,F) \,&:=\, \inf\set{\sup_{f\in F}\,\|Sf-Lf-y_0\| \co y_0\in Y,\; L\in\calL(X,Y),\; \rank(L)\le n},\\[2pt]
b_n(S,F) \,&:=\, \sup\Bigl\{r>0 \co x+\set{v\in V\co\|Sv\|\le r}\subseteq F
\text{ for some } x\in F \text{ and }\\[-2pt]
&\hspace{27mm} (n{+}1)\text{-dimensional } V\subseteq X
\text{ on which } S \text{ is injective}\Bigr\},\\[2pt]
c_n(S,F) \,&:=\, \inf_{b_1,\dots,b_n\in X'}\, \sup\set{\tfrac12\|Sf-Sg\| \co f,g\in F,\; 
\ipr{f,b_i}=\ipr{g,b_i} \text{ for } i\le n},\\[2pt]
d_n(S,F) \,&:=\, \inf\set{\sup_{f\in F}\,
\, \inf_{y \in V}\, \norm{Sf-y}\co
V\subseteq Y \text{ affine subspace},\; \dim(V)\le n},\\[2pt]
h_n(S,F) \,&:=\, \sup\Bigl\{a_n(BSA) \co
A\in\calL(\ell_2^{n+1},X) \text{ and } x\in F \text{ with } \\[-2pt]
&\hspace{22mm} A(B_{\ell_2^{n+1}})+x\subseteq F, \;
\text{ and } B\in\calL(Y,\ell_2^{n+1}) \text{ with } \|B\|\le1\Bigr\}, 
\end{align*}
the approximation, Bernstein, Gelfand, Kolmogorov and Hilbert numbers of~$S$ on $F$. 
%
The approximation, Bernstein and Kolmogorov numbers of the identity $S=\id_Y$ on~$F\subset Y$ are commonly known as \emph{linear, Bernstein and Kolmogorov widths} of $F$ in $Y$, see~\cite{Pinkus}. 
The latter two are not affected by introducing the more complicated notation with pairs $(S,F)$, 
since the 
$b_n$ and $d_n$ depend on $(S,F)$ only
through the image $S(F)\subseteq Y$. 
For $a_n$, $c_n$ and $h_n$, however, the
functionals $b_i\in X'$ act on the inputs $f\in F$. 
Hence, the operator~$S$ (which we often just consider to be $\id\co X\to Y$) gives an additional degree of freedom to specify the \emph{admissible measurements}, 
see~\cite{KNU} for relations to \emph{minimal errors} of different approximation schemes. 

\medskip

As for the s-numbers, we have
\[
h_n(S,F) \,\le\, b_n(S,F) \,\le\, c_n(S,F) \,\le\, a_n(S,F)
\quad\text{and}\quad
d_n(S,F) \,\le\, a_n(S,F),
\]
see e.g.~\cite[Proposition~3.2]{KNU}.
We now prove a reverse bound similar to Theorem~\ref{thm:main}.
Here, the scalar field is $\mathbb{R}$ 
to avoid technicalities with convexity. 

\begin{theorem}\label{thm:sets} 
Let $S\in\calL(X,Y)$ for real normed spaces $X$ and $Y$. 
For every convex $F\subseteq X$ with $S(F)$ bounded and every
$n\in\N_0$, we have
\[
a_n(S,F) \,\le\, 2\,e^{3/2}(n+1)^{3/2}\, h_n(S,F) 
\]
and, if $Y$ is a Hilbert space, then
\[
a_n(S,F) \,\le\, 2\,e\,(n+1)\, h_n(S,F). 
\]
If $F$ is symmetric, then $a_n(S,F)\le e\,(n+1)\,h_n(S,F)$ in general, 
and if $Y$ is additionally a Hilbert space, then $a_n(S,F)\le\sqrt{e\,(n+1)}\;h_n(S,F)$.
\end{theorem}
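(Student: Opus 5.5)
Following Section~\ref{sec:delta}, I would attach a determinant quantity to the pair $(S,F)$ and compare consecutive ratios from above with $h_n(S,F)$ and from below with $a_n(S,F)$. Define
\[
\Delta_k(S,F) := \sup\bigl\{|\det(BSA)| \co A\in\calL(\ell_2^k,X),\ x+A(B_{\ell_2^k})\subseteq F \text{ for some } x\in F,\ \|B\|\le1\bigr\},
\]
and $\Delta_0(S,F):=1$. The upper bound $\Delta_{n+1}(S,F)\le h_n(S,F)\,\Delta_n(S,F)$ is proved exactly as Lemma~\ref{lem:ratio-u}. Take the singular value decomposition $BSA=P\Sigma Q^*$. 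Then $AU$ with $U$ the first $n$ columns of $Q$ is still admissible, since $x+AU(B_{\ell_2^n})\subseteq x+A(B_{\ell_2^{n+1}})$. Moreover $AQ$ has the same image ball. The Bauhardt-type characterization \eqref{eq:def-hn} carries over to $h_n(S,F)$, because $a_n(BSA)\ge\sigma_n(BSA)$ for the admissible $A$, and directly $h_n(S,F)\ge a_n(WP^*BSAQ)=\sigma_n$.

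For the lower bound, take a near-maximizer $A,B$ with center $x_0$, and set $L:=SA(BSA)^{-1}BS$ and $y_0:=Sx_0-Lx_0$. Then $\sup_{f\in F}\|(S-L)(f-x_0)\|\ge a_n(S,F)$. Choose $f\in F$ and $b\in B_{Y'}$ nearly attaining this, and put $v:=f-x_0$. Here is the main obstacle: $\pm v$ is not available, so $A_0(\xi,\tau)=\sqrt{1-\lambda}A\xi+\sqrt\lambda\,\tau v$ need not map the ball into $F-x_1$ for any center $x_1$. My fix is to build a simplex-like set and shrink it. Convexity gives $x_0+(1-t)A(B)+tv\subseteq F$ for $t\in[0,1]$, since it is a convex combination of $x_0+A(B)\subseteq F$ and $f$. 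Centering at $x_1:=x_0+\tfrac t2 v$, I would use the ellipsoid
\[
x_1+\bigl\{(1-t)\sqrt{1-\mu}\,A\xi+\tfrac t2\sqrt\mu\,\tau v \co |\xi|^2+\tau^2\le1\bigr\}.
\]
One must check that it lies in $F$. The check is convexity in the cone over the ball $x_0+A(B)$ with apex $f$: an ellipsoid inscribed in a cone of height comparable to $t$ loses roughly a factor $\sqrt{n}$ in one of the directions. This is where the extra $\sqrt{n+1}$ and the factor $2$ should come from. Then Schur's formula \eqref{eq:schur} gives a lower bound of the form $\det(B_0SA_0)\gtrsim c_n\,\Delta_n(S,F)\cdot\tfrac12\ipr{(S-L)v,b}$. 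Optimizing the parameters should give $c_n\ge 1/(e^{3/2}(n+1)^{3/2})$. I expect this parameter choice to be the delicate calculation: for instance, $t\approx 1/(n+1)$ in the cone direction together with $\lambda=1/(n+1)$ in the weights, yielding factors $e^{-1/2}(n+1)^{-1/2}$ and $e^{-1}(n+1)^{-1}$.

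For $Y$ a Hilbert space, I would proceed as in Section~\ref{sec:proof}. Restrict to coisometries $B$ and take $b=(S-L)v/\|(S-L)v\|\in\ker B$. The weight on the $B$-side then disappears, so only the factor from the geometric inscription remains. That factor should be $\approx\sqrt{e(n+1)}$ times the one-sided weight $\sqrt{e(n+1)}$, together with the $2$ from centering, giving $2e(n+1)$. For symmetric $F$ no recentering is needed: $x_0=0$ can be taken by symmetrization ($\tfrac12(F-F)=F$ contains $A(B)$ whenever $x+A(B)\subseteq F$), and $\pm v$ are admissible. Then $A_0$ maps the ball into $F$ by convexity, since $\|\sqrt{1-\lambda}\,\xi\|+\sqrt\lambda|\tau|\le1$ yields a convex combination of points of $F$. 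In this case the proofs of Lemma~\ref{lem:ratio-l} and of Section~\ref{sec:proof} go through verbatim, giving $e(n+1)$ and $\sqrt{e(n+1)}$.
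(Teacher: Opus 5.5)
Your upper bound $\Delta_{n+1}(S,F)\le h_n(S,F)\,\Delta_n(S,F)$ and your symmetric-case argument match the paper. The gap is in the nonsymmetric lower bound. The $A_0$ you actually write down cannot give the stated exponents, and you did not carry out the decisive computation.

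Write $A_0(\xi,\tau)=(1-t)\sqrt{1-\mu}\,A\xi+\tfrac t2\sqrt\mu\,\tau v$ as $(\xi,\tau)\mapsto A\xi+\tau v$ composed with $\diag\bigl((1-t)\sqrt{1-\mu}\,I_n,\ \tfrac t2\sqrt\mu\bigr)$. Take $B_0y=(\sqrt{1-\lambda}\,By,\sqrt\lambda\,\ipr{y,b})$. Then \eqref{eq:schur} gives
\[
\det(B_0SA_0)=\tfrac12\,(1-t)^n t\,\bigl((1-\mu)^n\mu\bigr)^{1/2}\bigl((1-\lambda)^n\lambda\bigr)^{1/2}\,\det(BSA)\,\ipr{(S-L)v,b}.
\]
Each of $(1-t)^nt$, $(1-\mu)^n\mu$ and $(1-\lambda)^n\lambda$ is at most $\frac{n^n}{(n+1)^{n+1}}\le\frac1{n+1}$. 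So the prefactor is at most $\frac{1}{2(n+1)^2}$ for every choice of $t,\mu,\lambda$. This only yields $a_n(S,F)\le 2e^2(n+1)^2h_n(S,F)$ in general. For Hilbert $Y$, where the $\lambda$-factor drops, it yields $2e^{3/2}(n+1)^{3/2}h_n(S,F)$.

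Your accounting goes wrong in two places. First, the cone parameter costs $(1-t)^nt\approx\frac1{e(n+1)}$, a full factor $n+1$, not $\sqrt{e(n+1)}$. Second, there is no inscription loss at all. Your ellipsoid lies in $F$ trivially, since it sits inside the cylinder $x_0+(1-t)A(B_{\ell_2^n})+[0,t]\,v$, and that cylinder is contained in $F$ by convexity.

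The missing observation is that admissibility of $A_0$ means $x_1+A_0(B_{\ell_2^{n+1}})\subseteq F$, not $\|A_0\|\le1$. So the $\ell_2$-splitting weights $\sqrt{1-\mu},\sqrt\mu$ borrowed from Lemma~\ref{lem:ratio-l} are superfluous. Since $B_{\ell_2^{n+1}}\subseteq\{\norm{\xi}\le1,\ |\tau|\le1\}$, you may take $A_0(\xi,\tau)=(1-t)A\xi+\tfrac t2\tau v$; it maps that whole cylinder into $F-x_1$. This is exactly the paper's choice, with $t=\lambda$, $u=v/2$ and shift $x_0+\lambda u$.

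With this $A_0$ and $\lambda=\frac1{n+1}$, the prefactor becomes $\tfrac12(1-\lambda)^{3n/2}\lambda^{3/2}\ge\frac{1}{2(e(n+1))^{3/2}}$. For Hilbert $Y$, with a coisometry $B_0$, it becomes $\tfrac12(1-\lambda)^n\lambda\ge\frac1{2e(n+1)}$. These are precisely the stated bounds.

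So the extra $\sqrt{n+1}$ relative to the symmetric case comes from one substitution. On the $A$-side, the $\ell_2$-weights $\bigl((1-\lambda)^n\lambda\bigr)^{1/2}$ are replaced by the convex weights $(1-\lambda)^n\lambda$. It does not come from an inscription loss on top of them.
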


The dependence on~$n$ is again sharp, see Remark~\ref{rem:simplex}. 
For Hilbert spaces~$Y$, and with $a_n$ replaced by $c_n$ as well as $h_n$ replaced by $b_n$, a bound without the factor $2e$ was shown in~\cite[Theorem~3.6]{KNU}. 
See also \cite[Theorem~3.3]{KNU} for 
a bound with geometric means, similar to~\eqref{eq:bound-geom}.

\medskip 
\begin{proof}
As in Section~\ref{sec:proof}, we only need minor modifications of Lemmas~\ref{lem:ratio-u}
and~\ref{lem:ratio-l}. 
We call $(A,B)$ \emph{admissible of order $k$ with shift $x$} if
$A\in\calL(\ell_2^k,X)$ and $x\in F$ with $A(B_{\ell_2^k})+x\subseteq F$,
and $\|B\co Y\to\ell_2^k\|\le1$, and define $\Delta_k(S,F)$ as the supremum
of $|\det(BSA)|$ over such pairs. 

 
 

First, Lemma~\ref{lem:ratio-u} holds verbatim for $\Delta_n(S,F)$, since orthogonal compressions remain admissible with the same shift, and
$\sigma_{n}(BSA)=a_{n}(BSA)\le h_n(S,F)$ holds by definition for admissible pairs
of order $n$, we obtain 
\[
\Delta_{n+1}(S,F)\le h_n(S,F)\,\Delta_n(S,F).
\]
 
Instead of Lemma~\ref{lem:ratio-l}, we prove 
\[
\Delta_{n+1}(S,F) \,\ge\,
\frac{a_n(S,F)}{2\,e^{3/2}(n+1)^{3/2}} \;\Delta_n(S,F),
\]
as well as with exponent $1$ instead of $\tfrac32$ and 
without the factor
$\tfrac12$ for symmetric~$F$. 
Afterwards, we treat Hilbert spaces~$Y$.

Again, 
we can assume 
$\Delta_{n}(S,F)>0$. 
For admissible $(A,B)$ of order~$n$ with shift~$x$ and
$|\det(BSA)|\ge(1-\eps)\Delta_n(S,F)>0$, the operator
$L:=SA(BSA)^{-1}BS$ 
has
$\rank(L)\le n$, so that the definition of $a_n(S,F)$ yields $f\in F$
with 
$\|(S-L)(f-x)\|=\|Sf-Lf-(S-L)x\|>(1-\eps)\,a_n(S,F)$ 
for any given $\eps>0$. 

Choose $u:=\frac{f-x}{2}$ and
$b\in B_{Y'}$ with
$\ipr{(S-L)u,b}=\tfrac12\|(S-L)(f-x)\|>\tfrac{1-\eps}{2}\, a_n(S,F)$, 
and define 
\[
A_0(\xi,\tau) \,:=\, (1-\lambda)\,A\xi+\lambda\,\tau u,
\qquad
B_0y \,:=\, \left(\sqrt{1-\lambda}\,By,\; \sqrt{\lambda}\,\ipr{y,b}\right).  
\]
which 
are admissible of order $n+1$ with shift
$x_0:=(1-\lambda) x+\lambda\,\tfrac{f+x}{2}\in~F$, since $|\tau|\le1$ and
\[
A_0(\xi,\tau)+x_0
\,=\, (1-\lambda)\,(A\xi+x)+\lambda\Bigl(\tfrac{1+\tau}{2}f+\tfrac{1-\tau}{2}x\Bigr)
\,\in\, F
\]
by convexity. 
Now, the determinant identity via \eqref{eq:schur} is unchanged, i.e., we get 
\[\begin{split}
\det(B_0SA_0)
\,&=\, (1-\lambda)^{3n/2}\lambda^{3/2}\, \det(BSA)\cdot \ipr{(S-L)u,\,b} \\
\,&\ge\, (1-\eps)^2\, (1-\lambda)^{3n/2}\lambda^{3/2}\; \Delta_n(S,F)\;\frac12\,a_n(S,F).
\end{split}\]
The factor 
$(1-\lambda)^{3n/2}\lambda^{3/2}$ 
is maximized to 
$\bigl(\tfrac{n^n}{(n+1)^{n+1}}\bigr)^{3/2} \ge\frac{1}{(e(n+1))^{3/2}}$ for $\lambda:=\tfrac{1}{n+1}$. 
With $\eps\to0$, we get the result.

Now, let $Y$ be a Hilbert space. As in 
Section~\ref{sec:proof}, 
we restrict to admissible pairs whose
$B$ is a coisometry.
Since $B(S-L)=0$, as before, the vector 
$b:=\tfrac{(S-L)u}{\|(S-L)u\|}$ lies in $\ker(B)$, so that 
$\ipr{(S-L)u,b}_Y=\tfrac12\|(S-L)(f-x)\|>\tfrac{1-\eps}{2}\, a_n(S,F)$,  
and $B_0y:=\bigl(By,\,\ipr{y,b}_Y\bigr)$ is a coisometry.
So, again, the weights in $B_0$ disappear, while $A_0$ remains unchanged. 
The extracted factor becomes $(1-\lambda)^{n}\lambda$, and we conclude as above.

If $F$ is symmetric, this is 
literally as in the case $F=B_X$ treated in 
Lemma~\ref{lem:ratio-l}. 
In fact, all shifts may be taken $0$, as well as $u:=f$ and $b\in B_{Y'}$ with $\ipr{(S-L)f,b}=\|(S-L)f\|>(1-\eps)\,a_n(S,F)$,
i.e., without the factor~$\tfrac12$, and $(A_0,B_0)$ as before. 
This finishes the proof. \\
\end{proof}

\bigskip

The domain-side improvement of Theorem~\ref{thm:main} has no
direct analogue for general $F$, as the weights 
in the construction of $A_0$ are
dictated by the convexity of $F$, 
and not by $\|A_0\|\le1$. 
However, we may consider \emph{ellipsoids} in general normed spaces, defined by images of 
unit balls of Hilbert spaces under linear operators.
We again refer to~\cite[Theorem~3.3]{KNU}

\begin{corollary}\label{cor:ellipsoid} 
Let $S\in\calL(X,Y)$ for real normed spaces $X$ and $Y$, $H$ be a real Hilbert space, $R\in\calL(H,X)$ and $F:=R(B_H)$. 
Then, for all $n\in\N_0$,
\[
a_n(S,F) \,=\, a_n(SR)
\qquad\text{and}\qquad
h_n(S,F) \,=\, h_n(SR),
\]
and hence
$a_n(S,F)
\le\sqrt{e\,(n+1)}\;h_n(S,F)$.
If additionally $Y$ is a Hilbert space, then $a_n(S,F)=h_n(S,F)$.
\end{corollary}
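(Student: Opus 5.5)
The plan is to prove the two identities $a_n(S,F)=a_n(SR)$ and $h_n(S,F)=h_n(SR)$ directly. The inequality $a_n(S,F)\le\sqrt{e(n+1)}\,h_n(S,F)$ then follows from the Hilbert-domain part of Theorem~\ref{thm:main}, applied to $SR\in\calL(H,Y)$. The final claim for Hilbert $Y$ follows because all s-numbers of an operator between Hilbert spaces coincide. For non-compact operators this still holds for $a_n$ and $h_n$, e.g.\ via $a_n\le h_n$ on Hilbert spaces, which is \cite[2.3.4 \& 2.6.3]{Pietsch87} combined with the fact that $h_n=a_n$ there. Alternatively, one can use the Hilbert-space case of $\Delta^H$ with both isometries and coisometries, where the extracted factor becomes $1$.

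\textbf{Step 1: $a_n(S,F)=a_n(SR)$.} For ``$\le$'', take $L\in\calL(H,Y)$ with $\rank L\le n$ and $\|SR-L\|\le a_n(SR)+\eps$. We need a rank-$n$ operator on $X$ agreeing with $L$ on $F=R(B_H)$. Write $L=\sum_{i\le n} \ipr{\cdot,h_i}_H\, y_i$, and set $V:=\ker(R)^\perp$. The trouble is that the functionals $h\mapsto\ipr{h,h_i}$ need not factor through $R$ continuously on $X$.

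To fix this, I would use finite-rank perturbation. Since $F=R(B_H)$ is symmetric and $R$ need not have closed range, I would show that the infimum in $a_n(SR)$ can be taken over $L=MR$ with $M\in\calL(X,Y)$ of rank $\le n$. This holds because $R'(X')$ is weak$^*$-dense in $\ker(R)^\perp\subseteq H$, so each $h_i$ projected to $V$ can be approximated in norm by elements $R'b_i$ up to small error on $B_H$. More carefully, I would take finite-dimensional approximations: approximate $L$ by $\sum \ipr{R\cdot,b_i}y_i$ with small operator-norm error, using norm density of $R'(X')$ in $\overline{\mathrm{ran}\,R'}=\ker(R)^\perp$. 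I expect this density step to be the main obstacle; it rests on Hahn--Banach, since $\overline{R'(X')}=\ker(R)^\perp$ in Hilbert space because weak and norm closures of convex sets agree. The shift $y_0$ can be dropped because $F$ is symmetric, since averaging $y_0$ and $-y_0$ gives $0$.

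For ``$\ge$'', given $L$ on $X$ of rank $\le n$ and $y_0$, symmetry gives $\sup_{f\in F}\|(S-L)f\|\le\sup_{f\in F}\|(S-L)f-y_0\|$. The left side equals $\|SR-LR\|$, and $LR$ has rank $\le n$.

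\textbf{Step 2: $h_n(S,F)=h_n(SR)$.} For ``$\ge$'', take contractions $A\colon\ell_2^{n+1}\to H$ and $B$. Then $RA(B_{\ell_2})\subseteq F$ with shift $0$, so $a_n(B(SR)A)=a_n(BS(RA))$ is admissible in $h_n(S,F)$.

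For ``$\le$'', let $A\colon\ell_2^{n+1}\to X$ satisfy $A(B_{\ell_2^{n+1}})+x\subseteq F$. Since $F$ is symmetric and convex, $A(B_{\ell_2})\subseteq \frac12(F-F)=F=R(B_H)$. Hence $A$ factors as $A=R\tilde A$ with $\|\tilde A\|\le1$, taking $\tilde A e:=$ the minimal-norm preimage in $\ker(R)^\perp$. This map is linear, because $R$ restricted to $\ker(R)^\perp$ is injective, and it is a contraction. Then $a_n(BSA)=a_n(BSR\tilde A)\le h_n(SR)$.
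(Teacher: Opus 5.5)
Your proposal is correct and follows essentially the same route as the paper. You remove shifts by symmetry of $F$, use $\sup_{f\in F}\|(S-L)f\|=\|(S-L)R\|$ together with norm-density of $R'(X')$ in $(\ker R)^\perp$ for $a_n$, and factor admissible $A$ as $R\tilde A$ with $\|\tilde A\|\le 1$ for $h_n$. The only difference is that you treat non-injective $R$ via projections onto $(\ker R)^\perp$ and minimal-norm preimages, whereas the paper replaces $H$ by $(\ker R)^\perp$ at the outset. When you project the $h_i$, you should state explicitly that $SR=SRP_{(\ker R)^\perp}$, so that $\|SR-LP_{(\ker R)^\perp}\|\le\|SR-L\|$.

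For the Hilbert-$Y$ claim, your first justification of $a_n=h_n$ is circular as phrased. Your alternative, using $\Delta$ restricted to isometries and coisometries so that the extracted factor becomes $1$, is valid and replaces the appeal to coincidence of s-numbers on Hilbert spaces.
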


\begin{proof}
We may assume that $R$ is injective, replacing $H$ by $(\ker R)^\perp$
otherwise. Since $F$ is symmetric, the shifts in $h_n(S,F)$ are redundant
and the affine maps in $a_n(S,F)$ may be taken linear. 

For $a_n$, we use that for every $L\in\calL(X,Y)$ we have
$\sup_{f\in F}\|Sf-Lf\|=\|(S-L)R\|$ and
$\rank(L R)\le\rank(L)$, so that $a_n(S,F)\ge a_n(SR)$.
Conversely, let 
$L\in\calL(H,Y)$ with 
$L=\sum_{i=1}^{n}\ipr{\,\cdot\,,h_i}_H\,y_i$  
and 
$\|SR-L\|\le a_n(SR)+\eps$. 
As $R$ is injective, $R'(X')$ is dense in~$H$, and we may choose $b_i\in X'$ with
$\sum_{i=1}^n\|R'b_i-h_i\|\,\|y_i\|\le\eps$. 
Then, 
$L_0:=\sum_{i=1}^n\ipr{\cdot,b_i}\,y_i\in\calL(X,Y)$ has $\rank(L_0)\le n$
and
$\sup_{f\in F}\|Sf-L_0 f\|=\|SR-L\|+\eps\le a_n(SR)+2\eps$,
so that $a_n(S,F)\le a_n(SR)$.

For~$h_n$, note that a pair $(A,B)$ with $A(B_{\ell_2^{n+1}})\subseteq F$ and $\|B\|\le1$ is
precisely of the form $A=R\tilde A$ with
$\tilde A:=R^{-1}A\in\calL(\ell_2^{n+1},H)$ and $\|\tilde A\|\le1$. 
Since $BSA=B(SR)\tilde A$, we obtain $h_n(S,F)=h_n(SR)$.
The bounds now follow from Theorem~\ref{thm:main} applied to
$SR\in\calL(H,Y)$, and the final statement from the coincidence of all
s-numbers for operators between Hilbert spaces.
\end{proof}
 

\begin{remark}[Sharpness and the simplex]\label{rem:simplex}
In an upcoming paper \cite{simplex}, some widths (in $\ell_\infty$) of the regular simplex are determined. In particular, this shows that the exponent $3/2$ in Theorem~\ref{thm:sets} cannot be improved for the Hilbert numbers. 
This does not decide whether the exponent can be lowered when $h_n$ is replaced by the larger Bernstein numbers: in analogy with the Mityagin-Henkin conjecture, Novak~\cite{Novak95} conjectured that a factor of order~$n$ suffices for a comparison of~$c_n$ and~$b_n$ on general convex sets, while Theorem~\ref{thm:sets} only gives $(n+1)^{3/2}$. See also the discussion in~\cite[Section~6]{KNU}.
\end{remark}


\bigskip


\end{document}